\documentclass{article}

\usepackage[utf8]{inputenc}
\usepackage{amsmath}
\usepackage{amssymb}

\usepackage{color}
\usepackage{array}
\usepackage{amsthm,url,graphicx}

\newcolumntype{x}{>{\centering\arraybackslash\hspace{0pt}}p{4.5mm}}
\newtheorem{thm}{Theorem}[section]
\newtheorem{cor}[thm]{Corollary}
\newtheorem{lem}[thm]{Lemma}

\title{Farey fractions with equal numerators and the rank of unit fractions}
\author{Rogelio Tom\'as Garc\'ia 
\thanks{rogelio.tomas@cern.ch}\\
{\centering \it CERN, Esplanade des Particules 1, 1211 Meyrin, Switzerland}}

\date{\today}

\begin{document}

\maketitle
\abstract{Analytical expressions are derived for the number of fractions with equal numerators in the Farey sequence of order $n$, $F_n$, and in the truncated Farey sequence $F_n^{1/k}$ containing all Farey fractions below  $1/k$, with $1\leq k \leq n$. 
These developments lead to an expression for the rank of $1/k$ in $F_n$, or equivalently $\left|F_n^{1/k}\right|$, and to remarkable relations between the ranks of different unit fractions. Furthermore, the results are extended to Farey fractions of the form $2/k$.}

\section{Introduction}

The Farey sequence $F_n$ of order $n\in\mathbb{N}$ is an ascending sequence of irreducible fractions between
0/1 and 1/1 whose denominators do not exceed $n$, see e.g.~\cite{hardy,kanemitsu,tomas,daniel,andrey}.
Throughout the paper we exclude the fraction $0/1$ from $F_n$.
We also define the sequence $F_n^{1/k}$ as
\[
F_n^{1/k} = \left\{ \alpha \in F_n : \alpha < 1/k \right\} \ , \ \ k>0 \ .
\]

The number of Farey fractions with  denominators equal to $d$ in $F_n$ is well known to be given by Euler's Totient function, $\varphi(d)$, when $d\leq n$. It is also well known that the sum of all denominators in $F_n$ is twice the sum of all numerators~\cite{blake}. However, expressions for the number of fractions with equal numerators in $F_n$ are not given in the literature. 
Here, we define $\mathcal{N}_n(h)$ as the number of fractions with numerators equal to $h$ in $F_n$. A closely related quantity is derived in Proposition 1.29 in~\cite{andrey}, that is the number of Farey fractions in $F_n$ with numerators below or equal to $m$ as
\begin{equation}\label{ansumN}
\sum_{h=1}^{m}  \mathcal{N}_n(h) = \frac{1}{2} + \sum_{d\geq 1} \mu(d)\left\lfloor\frac{m}{d}\right\rfloor\left( \left\lfloor\frac{n}{d}\right\rfloor - \frac{1}{2}\left\lfloor\frac{m}{d}\right\rfloor \right) \ ,
\end{equation}
note that we have removed $1$ from the expression in Proposition 1.29 in~\cite{andrey} as we exclude 0/1 from $F_n$.

$\mathcal{N}^{1/k}_n(h)$ is defined as the number of fractions with numerators equal to $h$ in $F_n^{1/k}$.
In Section~\ref{sec1}, analytical expressions for $\mathcal{N}_n(h)$ and $\mathcal{N}^{1/k}_n(h)$ are derived that allow to reveal some properties of $\mathcal{N}_n(h)$. The variation of $\mathcal{N}_n(h)$ versus $n$ and $h$ is illustrated by establishing remarkably simple 
identities involving $\mathcal{N}_n(2h)$ and $\mathcal{N}_{n+ph}(h)$ for any $p\geq 0$.

We define $I_n(1/k)=\left|F_n^{1/k}\right|$ as the rank of $1/k$ in $F_n$. In Section~\ref{sec2} new analytical expressions for $I_n(1/k)$ are developed using the results in~Section~\ref{sec1} for $\mathcal{N}^{1/k}_n(h)$.
These expressions could help in the development of efficient algorithms to compute the rank of Farey  fractions and the related order-statics problem~\cite{andrey,Jakub}. Furthermore, $I_n(1/k)$ appears when deriving estimates for the number of resonance lines~\cite{tomas,asymp} and for estimates of partial Franel sums~\cite{PFS}. 
In Section~\ref{sec3} the previous results are easily extended to $\mathcal{N}^{2/k}_n(h)$ and $I_n(2/k)$.

\section{Farey fractions with equal numerators}\label{sec1}

\begin{lem}\label{lema1}
For given positive integer numbers $k>1$, $n$ and $h$, the number of Farey fractions 
in between $1/k$ and $1/(k-1)$ in $F_n$ with numerators equal to $h$ is $\varphi(h)$ when $ n \geq kh-1 $. Furthermore, the number of Farey fractions 
in between $1/k$ and $2/(2k-1)$ in $F_n$ with numerators equal to $h$ is $\varphi(h)/2$, for $h>2$.

\end{lem}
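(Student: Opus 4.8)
The plan is to use the fact that a reduced fraction with numerator $h$ is exactly $h/q$ with $\gcd(h,q)=1$, and to convert each stated interval into a range of admissible denominators $q$, so that both counts reduce to counting integers coprime to $h$ in a short interval.

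For the first claim, cross-multiplying shows that $1/k < h/q < 1/(k-1)$ is equivalent to $(k-1)h < q < kh$. Every such $q$ obeys $q \le kh-1 \le n$ under the hypothesis $n \ge kh-1$, so the order-$n$ condition is automatically met and never discards a candidate. Writing $q=(k-1)h+r$ with $r\in\{1,\dots,h-1\}$ and using $\gcd(h,q)=\gcd(h,r)$, I reduce the problem to counting $r\in\{1,\dots,h-1\}$ with $\gcd(h,r)=1$. Since $r=h$ is never coprime to $h$ for $h>1$, this number coincides with the count of residues in $\{1,\dots,h\}$ coprime to $h$, namely $\varphi(h)$ (the degenerate value $h=1$ depends on the endpoint convention and carries no real content).

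For the second claim I would first record that $2/(2k-1)$ is the mediant of $1/k$ and $1/(k-1)$ and hence lies strictly between them, so the target interval is the part of the previous one adjacent to $1/k$. Cross-multiplying, $h/q<2/(2k-1)$ becomes $(2k-1)h<2q$, which together with $h/q>1/k$, i.e. $q<kh$, yields $(2k-1)h/2 < q < kh$. The clean move is the substitution $q=kh-s$: then $\gcd(h,q)=\gcd(h,s)$ and the double inequality collapses to $1\le s<h/2$, uniformly in the parity of $h$. The count is thus the number of $s$ with $1\le s<h/2$ and $\gcd(h,s)=1$.

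The decisive step is a symmetry argument. The $\varphi(h)$ residues in $\{1,\dots,h-1\}$ coprime to $h$ are permuted by $s\mapsto h-s$, whose only possible fixed point is $s=h/2$; for $h>2$ this value, when it is an integer, is not coprime to $h$, so the coprime residues split into $\varphi(h)/2$ disjoint pairs $\{s,h-s\}$, each contributing exactly one element with $s<h/2$. Hence precisely $\varphi(h)/2$ of them satisfy $s<h/2$, which is the asserted count. I expect the only delicate points to be the parity bookkeeping that turns the strict bound $q>(2k-1)h/2$ into the uniform condition $s<h/2$, and the check that the endpoints are harmless: $q=kh$ (value $1/k$) and, for even $h$, $q=(2k-1)h/2$ (value $2/(2k-1)$) are both non-coprime to $h$ and so drop out automatically. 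The hypothesis $h>2$ is exactly what removes the fixed point $s=h/2$ of the reflection and makes $\varphi(h)/2$ an integer; for $h=2$ the endpoint $s=1$ is coprime and the halving fails.
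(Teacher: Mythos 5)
Your proof is correct, and it reaches the stated counts by a more self-contained route than the paper. The paper invokes the bijection $t/h \mapsto h/(hk-t)$ from Theorem~1 of~\cite{PFS}, which carries Farey fractions of denominator $h$ in $(0,1]$ onto Farey fractions of numerator $h$ in $(1/k,1/(k-1)]$; the count $\varphi(h)$ is then inherited from the classical denominator count, and the second assertion follows because $2/(2k-1)$ is the image of $1/2$ (implicitly using that exactly half of the totatives of $h$ lie below $h/2$ when $h>2$). Your substitution $q=kh-s$ is precisely the inverse of that map, but you derive everything directly: you translate the inequalities into the denominator windows $(k-1)h<q<kh$ and $kh-h/2<q<kh$, reduce $\gcd(h,q)$ to $\gcd(h,s)$, and prove the halving via the involution $s\mapsto h-s$, whose only possible fixed point $h/2$ fails to be coprime to $h$ once $h>2$. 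What your version buys is that the hypotheses become visible: $n\geq kh-1$ is exactly what guarantees every admissible denominator is at most $n$, and $h>2$ is exactly what removes the fixed point; the paper leaves both implicit. One caveat: as you yourself flag, for $h=1$ your strictly open interval yields $0$ rather than $\varphi(1)=1$; the way the lemma is consumed later (Corollary~\ref{corN1k}) shows the intended interval is the half-open $(1/k,1/(k-1)]$, whose right endpoint supplies the missing unit fraction, so your parenthetical remark about the endpoint convention is the correct resolution and is worth stating explicitly rather than in passing.
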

\begin{proof}
Using Theorem~1 in~\cite{PFS} we establish a  map between the Farey fractions in between 0/1 and 1/1 and the Farey fractions in between $1/k$ and $1/(k-1)$:
\begin{equation}
\frac{t}{h} \mapsto \frac{h}{hk-t }\label{map}\ . \nonumber
\end{equation}
Therefore, for every Farey fraction with numerator equal to $h$ in between  $1/k$ and $1/(k-1)$ there is one Farey fraction with denominator equal to $h$ in between $0/1$ and $1/1$ and vice-versa. The number of these fractions is $\varphi(h)$ with a maximum order of $hk-1$. 

Since $2/(2k-1)$ is the image of $1/2$ there are  $\varphi(h)/2$ fractions with numerators equal to $h$ between  $1/k$ and $2/(2k-1)$, for $h>2$.

\end{proof}

\begin{thm} \label{N_n(h)}
For given integers $h$ and $n$, such that $0<h<n$, $\mathcal{N}_n(h)$ is given by
\[
\mathcal{N}_n(h)=n \frac{\varphi(h)}{h} - \varphi(h) - \sum_{d|h}\mu(d)\left\{\frac{n}{d}\right\}   + \delta_{1h}\ ,
\]
where $\mu(x)$ is the M\"obius function, $\{x\}$ represents the fractional part of $x$ and $\delta_{xy}$ is the Kronecker delta symbol. 
\end{thm}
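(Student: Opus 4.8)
The plan is to count directly the admissible denominators for the numerator $h$. A reduced fraction in $F_n$ with numerator $h$ has the form $h/q$ with $\gcd(h,q)=1$; since every element of $F_n$ lies in $(0,1]$ we need $q\ge h$, and since denominators do not exceed $n$ we need $q\le n$. Hence $\mathcal{N}_n(h)=\#\{q:\ h\le q\le n,\ \gcd(q,h)=1\}$. Introducing the coprime-counting function $\Phi(x)=\#\{q:\ 1\le q\le x,\ \gcd(q,h)=1\}$, this reads $\mathcal{N}_n(h)=\Phi(n)-\Phi(h-1)$.

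Next I would evaluate $\Phi$ by M\"obius inversion. Using that $\sum_{d\mid\gcd(q,h)}\mu(d)$ equals $1$ when $\gcd(q,h)=1$ and $0$ otherwise, and interchanging the order of summation, one obtains $\Phi(x)=\sum_{d\mid h}\mu(d)\lfloor x/d\rfloor$, the inner count of multiples of $d$ up to $x$ being $\lfloor x/d\rfloor$. For the term $\Phi(n)$ I would split $\lfloor n/d\rfloor=n/d-\{n/d\}$ and invoke the classical identity $\sum_{d\mid h}\mu(d)/d=\prod_{p\mid h}(1-1/p)=\varphi(h)/h$, which yields $\Phi(n)=n\varphi(h)/h-\sum_{d\mid h}\mu(d)\{n/d\}$. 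This already supplies the first and third terms of the claimed formula.

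It remains to identify $\Phi(h-1)$. For $h>1$ the integer $h$ is not coprime to $h$, so the $\varphi(h)$ integers in $[1,h]$ that are coprime to $h$ all lie in $[1,h-1]$; thus $\Phi(h-1)=\varphi(h)$ and the formula follows with $\delta_{1h}=0$. The single exceptional case is $h=1$: here $\Phi(h-1)=\Phi(0)=0$ whereas $\varphi(1)=1$, so subtracting $\varphi(h)$ overshoots by exactly one. This unit discrepancy, which reflects that the fraction $1/1$ is genuinely present when $h=1$ but absent (as $h/h$ reduces) when $h>1$, is precisely what the Kronecker term $\delta_{1h}$ restores, giving $\Phi(h-1)=\varphi(h)-\delta_{1h}$ uniformly.

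The only delicate point is this $h=1$ boundary; every other step is routine number theory. I would therefore either dispatch $h=1$ separately at the outset (checking directly that $\mathcal{N}_n(1)=n$ matches the formula) or carry $\delta_{1h}$ through as the correction to $\Phi(h-1)$, and confirm the totient identity and the careful handling of the endpoints $q=h$ and $q=n$. I expect no substantive obstacle beyond this bookkeeping. As an alternative route one could instead sum the interval counts of Lemma~\ref{lema1} over $k$, but the direct residue count above avoids the partially filled final interval and is cleaner.
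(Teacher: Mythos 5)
Your proof is correct and follows essentially the same route as the paper: a direct count of coprime denominators $q$ in a range, detected via $\sum_{d\mid\gcd(q,h)}\mu(d)$, with the sum interchange, $\lfloor x\rfloor = x - \{x\}$, and $\sum_{d\mid h}\mu(d)/d = \varphi(h)/h$. The only cosmetic difference is that you count $q$ from $h$ to $n$ and absorb the $h=1$ anomaly into $\Phi(h-1)=\varphi(h)-\delta_{1h}$, whereas the paper counts $q$ from $h+1$ to $n$ and adds $\delta_{1h}$ explicitly for the fraction $1/1$.
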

\begin{proof}
For given $n$ and $h$ we are going to count all Farey fractions of the form $h/k$ such that $n\geq k>h>0$, noting that this explicitly excludes the fraction $1/1$ and it needs to be added explicitly via the term $\delta_{1h}$,
\begin{eqnarray}
\mathcal{N}_n(h)&=&\delta_{1h}+\sum_{\substack{k=h+1\\{\rm gcd}(h,k)=1}}^n 1 \ \ \ \  = \delta_{1h}+\sum_{k=h+1}^n\, \sum_{d|{\rm gcd}(h,k)}\mu(d) \nonumber\\
&=& \delta_{1h}+\sum_{k=h+1}^n\sum_{\substack{ d| h\\ d|k }}\mu(d) 
= \delta_{1h}+\sum_{\substack{ d| h }} \sum_{\substack{k=h+1\\d |k}}^n \mu(d) \nonumber\\
&=& \delta_{1h}+\sum_{\substack{ d| h }} \left(  \sum_{\substack{k=1\\d |k}}^n \mu(d) \nonumber -  \sum_{\substack{k=1\\d |k}}^h \mu(d) \nonumber \right)\\
&=& \delta_{1h}+\sum_{d|h}\mu(d)\left( \left\lfloor\frac{n}{d}\right\rfloor -  \left\lfloor\frac{h}{d}\right\rfloor   \right) \ , \nonumber
\end{eqnarray}
where we have used that
\[
\sum_{d\mid n}\mu (d) = \left\{  \begin{array}{ll} 1 \text{ if } n=1\\0 \text{ if } n>1 \end{array}\right.\ {\rm and }\ \
\sum_{\substack{k=1 \\ d|k}}^n 1 = \left\lfloor\frac{n}{d}\right\rfloor\ .
\]
Then, using $\lfloor x \rfloor = x - \left\{x\right\}$ and $\sum_{d|h} \mu(d)/d =\varphi(h)/h$, we have
\[
\mathcal{N}_n(h)=n \frac{\varphi(h)}{h} - \varphi(h) - \sum_{d|h}\mu(d)\left\{\frac{n}{d}\right\}   + \delta_{1h}\ .
\]
\end{proof}

\begin{cor} \label{primepower}
For given integers $h$ and $n$, such that $0<h<n$, 
and $h=p^m$ being a prime power with $m>0$, $\mathcal{N}_n(p^m)$ is given by
\[
\mathcal{N}_n(p^m)=\left\lceil(n-p^m) \left(1- \frac{1}{p} \right)\right\rceil
\ .
\] 
\end{cor}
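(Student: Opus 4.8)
The plan is to specialize the formula from Theorem~\ref{N_n(h)} to $h=p^m$ and then convert the resulting expression, which features a fractional part, into a ceiling. First I would note that since $m>0$ we have $h=p^m>1$, so the term $\delta_{1h}$ vanishes. The divisors of $p^m$ are $1,p,p^2,\dots,p^m$, but the M\"obius function kills every divisor divisible by $p^2$; hence only $d=1$ and $d=p$ survive in the sum $\sum_{d\mid h}\mu(d)\{n/d\}$, with $\mu(1)=1$ and $\mu(p)=-1$. Using $\varphi(p^m)/p^m = 1-1/p$ and $\varphi(p^m)=p^m(1-1/p)$, together with $\{n\}=0$ (as $n$ is an integer), the theorem should reduce to
\[
\mathcal{N}_n(p^m) = (n-p^m)\left(1-\tfrac1p\right) + \left\{\tfrac np\right\}\ .
\]

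Next I would rewrite the linear part so as to expose an integer. Writing $(n-p^m)(1-1/p) = A - n/p$ with $A := (n-p^m)+p^{m-1}\in\mathbb{Z}$ (here $p^{m-1}$ is an integer because $m\geq 1$), the expression becomes $A - n/p + \{n/p\} = A - \lfloor n/p\rfloor$, which in passing reconfirms that $\mathcal{N}_n(p^m)$ is indeed a nonnegative integer, as it must be.

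Finally, the key step is to identify $A-\lfloor n/p\rfloor$ with the claimed ceiling. I would apply the reflection identity $\lceil x\rceil = -\lfloor -x\rfloor$ with $x=(n-p^m)(1-1/p)=A-n/p$, and pull the integer $A$ out of the floor, obtaining $\lceil x\rceil = -\lfloor n/p - A\rfloor = A-\lfloor n/p\rfloor$, which matches the expression above exactly. I expect this last identification to be the only genuine subtlety: it hinges on the fact that $(n-p^m)(1-1/p)$ differs from the integer $A$ by precisely $-n/p$, so that adding back $\{n/p\}$ rounds it up to its ceiling. An equivalent route would be a short case split according to whether $p\mid n$, but the floor/ceiling reflection avoids any casework.
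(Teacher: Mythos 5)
Your proposal is correct and follows essentially the same route as the paper: specialize Theorem~\ref{N_n(h)} to $h=p^m$, observe that only $d=1$ and $d=p$ survive in the M\"obius sum, and arrive at $(n-p^m)\left(1-\frac{1}{p}\right)+\left\{\frac{n}{p}\right\}=n-p^m+p^{m-1}-\left\lfloor\frac{n}{p}\right\rfloor$. Your final step, identifying this with the ceiling via $\lceil x\rceil=-\lfloor -x\rfloor$ after pulling out the integer $A=n-p^m+p^{m-1}$, is the one point the paper leaves implicit, and you handle it correctly.
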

\begin{proof}
This is derived from the previous Theorem~\ref{N_n(h)}  using that 
\[\varphi(p^m)=p^m-p^{(m-1)}\ {\rm and} \ \  
\sum_{d|p^m}\mu(d)\left\{\frac{n}{d}\right\}=-\left\{\frac{n}{p}\right\}\ ,
\] 
for $p$ being a prime number, yielding
\[
\mathcal{N}_n(p^m)=(n-p^m) \left(1- \frac{1}{p} \right) +\left\{\frac{n}{p}\right\}  
=n -p^m+p^{(m-1)} - \left\lfloor\frac{n}{p}\right\rfloor 
\ .
\]

\end{proof}

\begin{cor}\label{corN} For given integers $h$, $n$ and $p$, such that $0<h<n$ and $p \geq 0$
\[
\mathcal{N}_{n+ph}(h)=\mathcal{N}_{n}(h) + p\varphi(h)\ .
\]
\end{cor}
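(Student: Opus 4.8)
The plan is to read this off directly from the closed form for $\mathcal{N}_n(h)$ established in Theorem~\ref{N_n(h)}, by substituting $n+ph$ for $n$ and comparing term by term. Writing
\[
\mathcal{N}_n(h)=n\frac{\varphi(h)}{h}-\varphi(h)-\sum_{d\mid h}\mu(d)\left\{\frac{n}{d}\right\}+\delta_{1h}\ ,
\]
I observe that of the four summands only the first and third depend on $n$, so the constant contributions $-\varphi(h)$ and $\delta_{1h}$ are unaffected by the shift $n\mapsto n+ph$ and may be set aside.

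First I would handle the linear term. Replacing $n$ by $n+ph$ gives $(n+ph)\varphi(h)/h=n\varphi(h)/h+ph\cdot\varphi(h)/h=n\varphi(h)/h+p\varphi(h)$, so this term alone produces exactly the claimed increment $p\varphi(h)$.

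The key step is to show that the fractional-part sum is invariant under the shift. For every divisor $d\mid h$ one has $d\mid ph$, hence $ph/d\in\mathbb{Z}$ and
\[
\left\{\frac{n+ph}{d}\right\}=\left\{\frac{n}{d}+\frac{ph}{d}\right\}=\left\{\frac{n}{d}\right\}\ ,
\]
since adding an integer leaves the fractional part unchanged. Summing against $\mu(d)$ over $d\mid h$ then yields $\sum_{d\mid h}\mu(d)\{(n+ph)/d\}=\sum_{d\mid h}\mu(d)\{n/d\}$, so the third term is identical for both arguments. Combining the three observations gives $\mathcal{N}_{n+ph}(h)=\mathcal{N}_n(h)+p\varphi(h)$.

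There is no genuine obstacle here; the single point requiring care is the divisibility implication $d\mid h\Rightarrow d\mid ph$, which is precisely what stabilises each fractional part and therefore what makes the increment the clean multiple $p\varphi(h)$ rather than an $n$-dependent quantity. I would also note in passing that the hypotheses stay consistent on both sides: from $0<h<n$ and $p\ge 0$ we get $h<n\le n+ph$, so Theorem~\ref{N_n(h)} applies to $\mathcal{N}_{n+ph}(h)$ as well.
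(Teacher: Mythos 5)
Your proposal is correct and is essentially the paper's own argument: the paper likewise derives the corollary from Theorem~\ref{N_n(h)} by noting that $\sum_{d\mid h}\mu(d)\left\{\frac{n}{d}\right\}=\sum_{d\mid h}\mu(d)\left\{\frac{n+ph}{d}\right\}$, with the linear term supplying the increment $p\varphi(h)$. You merely spell out the divisibility justification $d\mid h\Rightarrow d\mid ph$ and the hypothesis check more explicitly than the paper does.
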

\begin{proof}
This is derived from the previous Theorem~\ref{N_n(h)} and realizing that
\[
\sum_{d|h}\mu(d)\left\{\frac{n}{d}\right\}   = \sum_{d|h}\mu(d)\left\{\frac{n+ph}{d}\right\}\ ,
\]
for any $p\geq 0$.

\end{proof}

\begin{cor}\label{corheven}
For given integers $h$ and $n$, such that $0<h<n/2$,
\[
\mathcal{N}_{n}(2h)=\left\{  
\begin{array}{ll} 
 \mathcal{N}_{n}(h) - \varphi(h) \text{ if h is even } \\\displaystyle
\frac{1}{2}\left(\mathcal{N}_{n}(h) - \varphi(h) -\delta_{1h}
 +\sum_{d|h}\mu(d)w(n/d)\right) \text{ if h is odd}  \end{array}\right. \ ,
\]
where  $w(x)=\lfloor x \rfloor\mod 2$.
\end{cor}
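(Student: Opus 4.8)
The plan is to apply Theorem~\ref{N_n(h)} directly to both $\mathcal{N}_n(2h)$ and $\mathcal{N}_n(h)$ and compare the resulting expressions, splitting on the parity of $h$. Note first that the hypothesis $0<h<n/2$ guarantees $0<2h<n$, so the theorem is valid for both arguments, and that $\delta_{1,2h}=0$ since $2h\ge 2$. Everything then reduces to tracking how the three ingredients $\varphi(2h)$, $\varphi(2h)/(2h)$, and the M\"obius sum $\sum_{d\mid 2h}\mu(d)\{n/d\}$ relate to their counterparts for $h$.

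For $h$ even I would write $h=2^am$ with $a\ge 1$ and $m$ odd, so that $2h=2^{a+1}m$. Then $\varphi(2h)=2\varphi(h)$ and $\varphi(2h)/(2h)=\varphi(h)/h$, so the leading term of $\mathcal{N}_n(2h)$ coincides exactly with that of $\mathcal{N}_n(h)$. The key observation is that $h$ and $2h$ share the same set of prime factors (the same radical), hence exactly the same squarefree divisors; since $\mu$ vanishes on non-squarefree integers, this forces $\sum_{d\mid 2h}\mu(d)\{n/d\}=\sum_{d\mid h}\mu(d)\{n/d\}$. Substituting these into the formula of Theorem~\ref{N_n(h)} and using $\delta_{1h}=0$ (as $h$ is even), the difference with $\mathcal{N}_n(h)$ collapses to the single term $-\varphi(h)$, giving $\mathcal{N}_n(2h)=\mathcal{N}_n(h)-\varphi(h)$.

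For $h$ odd I would instead use $\varphi(2h)=\varphi(h)$ and $\varphi(2h)/(2h)=\tfrac12\,\varphi(h)/h$, and split the divisors of $2h$ into the odd divisors $d\mid h$ and the even divisors $2d$ with $d\mid h$. Since $\gcd(2,d)=1$ one has $\mu(2d)=-\mu(d)$, so that
\[
\sum_{d\mid 2h}\mu(d)\left\{\frac{n}{d}\right\}=\sum_{d\mid h}\mu(d)\left\{\frac{n}{d}\right\}-\sum_{d\mid h}\mu(d)\left\{\frac{n}{2d}\right\}\ .
\]
Plugging this into Theorem~\ref{N_n(h)} and comparing the outcome with $\tfrac12\bigl(\mathcal{N}_n(h)-\varphi(h)-\delta_{1h}\bigr)$, the leading terms match, and the claimed identity reduces, term by term against $\mu(d)$, to the elementary fact $w(x)=2\{x/2\}-\{x\}$ applied with $x=n/d$.

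The main point, and the one piece of genuine content, is this last auxiliary identity, which is precisely where the function $w$ enters. I would prove it by writing $\lfloor x\rfloor=m$ and $\{x\}=f$ with $0\le f<1$ and splitting on the parity of $m$: if $m$ is even then $\{x/2\}=f/2$, so $2\{x/2\}-\{x\}=0=w(x)$, while if $m$ is odd then $\{x/2\}=\tfrac12+f/2$, so $2\{x/2\}-\{x\}=1=w(x)$. Summing over $d\mid h$ against $\mu(d)$ then converts the residual fractional-part combination into $\tfrac12\sum_{d\mid h}\mu(d)\,w(n/d)$, which is exactly the correction term in the statement, completing the odd case.
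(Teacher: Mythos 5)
Your proposal is correct and follows essentially the same route as the paper's own proof: apply Theorem~\ref{N_n(h)} to both $\mathcal{N}_n(2h)$ and $\mathcal{N}_n(h)$, note that for even $h$ the extra divisors of $2h$ are non-squarefree so the M\"obius sum is unchanged, and for odd $h$ split the divisors of $2h$ as $d$ and $2d$ with $\mu(2d)=-\mu(d)$, reducing everything to the identity $w(x)=2\{x/2\}-\{x\}$. The only difference is cosmetic: you spell out the proof of that last identity by cases on the parity of $\lfloor x\rfloor$, which the paper merely asserts.
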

\begin{proof}
For the case that $h$ is even we assume it is expressed as $h=2^mp$, with $m>0$. Starting from Theorem~\ref{N_n(h)},
\begin{eqnarray}
\mathcal{N}_n(2h) &=& n \frac{\varphi(2h)}{2h} - \varphi(2h) - \sum_{d|2h}\mu(d)\left\{\frac{n}{d}\right\} \nonumber \\
 &=& n \frac{\varphi(h)}{h} - 2\varphi(h) - \sum_{d|h}\mu(d)\left\{\frac{n}{d}\right\} 
 - \sum_{d|p}\mu(2^{(m+1)}d)\left\{\frac{n}{2^{m+1}d}\right\} \nonumber \\
 &=& \mathcal{N}_{n}(h) - \varphi(h)\ ,\nonumber
\end{eqnarray}
where we have used that $\varphi(2h)=2\varphi(h)$ for even $h$ and that $\mu(2^{(m+1)}d)=0$ for $m>0$.

For the case that $h$ is odd $\varphi(2h)=\varphi(h)$ and for $d|h$ we have $\mu(2d)=-\mu(d)$, hence
\begin{eqnarray}
\mathcal{N}_n(2h) &=& n \frac{\varphi(h)}{2h} - \varphi(h) - \sum_{d|2h}\mu(d)\left\{\frac{n}{d}\right\} \nonumber \\
 &=& n \frac{\varphi(h)}{2h} - \varphi(h) - \sum_{d|h}\mu(d)\left\{\frac{n}{d}\right\} 
 - \sum_{d|h}\mu(2d)\left\{\frac{n}{2d}\right\} \nonumber \\
 &=& \frac{1}{2}\left(\mathcal{N}_{n}(h) -\delta_{1h} - \varphi(h)
 +\sum_{d|h}\mu(d)\left(2\left\{\frac{n}{2d}\right\} - \left\{\frac{n}{d}\right\} \right)\right)\ ,\nonumber
\end{eqnarray}
where we only need to introduce  $w(x)=2\{x/2\} -\{x\}=\lfloor x \rfloor\mod 2$.

\end{proof}

As an illustration of the above results let us inspect $F_5$ and $F_8$,
\begin{eqnarray}
F_5&=&\left\{ 
\frac{1}{5}, 
\frac{1}{4}, 
\frac{1}{3}, 
\frac{2}{5}, 
\frac{1}{2}, 
\frac{\color{blue}3}{5}, 
\frac{2}{3}, 
\frac{\color{blue}3}{4}, 
\frac{4}{5}, 
\frac{1}{1}\right\}\ , \nonumber\\
F_8&=&\left\{ 
\frac{1}{8},  
\frac{1}{7}, 
\frac{1}{6}, 
\frac{1}{5}, 
\frac{1}{4}, 
\frac{2}{7}, 
\frac{1}{3}, 
\frac{\color{blue}3}{8}, 
\frac{2}{5}, 
\frac{\color{blue}3}{7}, 
\frac{1}{2}, 
\frac{4}{7}, 
\frac{\color{blue}3}{5}, 
\frac{5}{8}, 
\frac{2}{3}, 
\frac{5}{7}, 
\frac{\color{blue}3}{4}, 
\frac{4}{5},
\frac{5}{6}, 
\frac{6}{7}, 
\frac{7}{8}, 
\frac{1}{1}\right\} \ .\nonumber
\end{eqnarray}
The sequence $F_5$ has two fractions with numerators equal to 3 and indeed, from Theorem~\ref{N_n(h)},
\[
\mathcal{N}_5(3)=5 \frac{\varphi(3)}{3} - \varphi(3) -\mu(3)\left\{\frac{5}{3}\right\}=2   \ ,
\]
as $\varphi(3)=2$ and $\mu(3)=-1$. $F_8$ has four    fractions with numerators equal to 3 and indeed, from Corollary~\ref{corN},
\[
\mathcal{N}_{5+1\cdot3}(3)= \mathcal{N}_{5}(3) + 1\cdot\varphi(3)  =4 \ .
\]
$\mathcal{N}_8(2)=3$ and $\mathcal{N}_8(4)=2$ and according to Corollary~\ref{corheven} 
\[
\mathcal{N}_{8}(2\cdot2)= \mathcal{N}_{8}(2) - \varphi(2) \ .
\]
$\mathcal{N}_8(6)=1$, $\mathcal{N}_8(3)=4$, $w(8)=w(8/3)=0$ and according to Corollary~\ref{corheven} 
\[
\mathcal{N}_{8}(2\cdot3)= \frac{1}{2}(\mathcal{N}_{8}(3) - \varphi(3) + \mu(1)w(8)+ \mu(3)w(8/3) ) \ .
\]

\begin{cor}\label{sumN}
For given integers $m$ and $n$  such that $0<m  < n$ the numer of Farey fractions in $F_n$ with numerators below or equal to $m$ is given by 
\[
\sum_{h=1}^m\mathcal{N}_n(h)=1 + n \sum_{h=1}^m\frac{\varphi(h)}{h} - \Phi(m) - \sum_{h=1}^m\sum_{d|h}\mu(d)\left\{\frac{n}{d}\right\}   \ ,
\]
where $\Phi(n)=\sum_{j=1}^n \varphi(j)$ is the Totient summatory function. 
\end{cor}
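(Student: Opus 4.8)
The plan is to obtain this directly by summing the closed form from Theorem~\ref{N_n(h)} over the range $1 \leq h \leq m$. First I would note that the hypothesis $0 < m < n$ guarantees every $h$ in this range satisfies $0 < h < n$, so Theorem~\ref{N_n(h)} applies term by term and we may legitimately substitute
\[
\mathcal{N}_n(h) = n\frac{\varphi(h)}{h} - \varphi(h) - \sum_{d|h}\mu(d)\left\{\frac{n}{d}\right\} + \delta_{1h}
\]
into each summand of $\sum_{h=1}^m \mathcal{N}_n(h)$.

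By linearity of summation I would then split the result into four pieces. The term $n\sum_{h=1}^m \varphi(h)/h$ is already in the stated form and requires nothing further. The sum $\sum_{h=1}^m \varphi(h)$ is exactly $\Phi(m)$ by the definition of the Totient summatory function. The double sum $\sum_{h=1}^m \sum_{d|h}\mu(d)\left\{n/d\right\}$ also appears verbatim in the target expression, so it too carries over unchanged. Finally, the Kronecker-delta contribution $\sum_{h=1}^m \delta_{1h}$ collapses to the constant $1$, since among $h = 1, \dots, m$ only the term at $h = 1$ is nonzero (and $m \geq 1$ ensures it is indeed present). Assembling these four contributions reproduces the claimed identity.

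There is no genuine obstacle here: the statement is a purely formal consequence of Theorem~\ref{N_n(h)} together with the definition of $\Phi$. The only points deserving explicit mention are that the range condition $m < n$ is precisely what licenses the term-by-term application of the theorem, and that the delta symbols sum to exactly $1$ rather than vanishing or being double-counted.
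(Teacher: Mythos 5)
Your proposal is correct and matches the paper's own (much terser) proof, which simply states that the identity is derived directly from Theorem~\ref{N_n(h)}; your term-by-term summation and the observation that the Kronecker deltas contribute exactly $1$ are precisely the details the paper leaves implicit.
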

\begin{proof}
This is directly derived from Theorem~\ref{N_n(h)} and it is given for comparison with expression~(\ref{ansumN}), from Proposition 1.29 in~\cite{andrey}.

\end{proof}

\begin{cor}\label{corN1k}
For given integers $h$, $n$ and $k$ such that $0<h  < n$ and $0<k< n/h$ 
\[
\mathcal{N}^{1/k}_n(h)=n \frac{\varphi(h)}{h} - k\varphi(h) - \sum_{d|h}\mu(d)\left\{\frac{n}{d}\right\}   \ .
\]
\end{cor}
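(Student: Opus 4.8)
The plan is to count the relevant fractions directly, mirroring the proof of Theorem~\ref{N_n(h)}. A fraction with numerator $h$ in $F_n^{1/k}$ has the form $h/j$ with $\gcd(h,j)=1$, $j\leq n$, and $h/j<1/k$. This last inequality is equivalent to $j>hk$, so I would count the integers $j$ in the range $hk<j\leq n$ that are coprime to $h$. The hypothesis $0<k<n/h$ guarantees $hk<n$, so this range is nonempty; moreover, since $1/k\leq 1$ the fraction $1/1$ is automatically excluded from $F_n^{1/k}$, so no analogue of the $\delta_{1h}$ term of Theorem~\ref{N_n(h)} is needed.

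First I would expand the coprimality condition via M\"obius inversion exactly as in Theorem~\ref{N_n(h)}, interchanging the order of summation to reach
\[
\mathcal{N}^{1/k}_n(h)=\sum_{\substack{j=hk+1\\ \gcd(h,j)=1}}^{n}1=\sum_{d\mid h}\mu(d)\left(\left\lfloor\frac{n}{d}\right\rfloor-\left\lfloor\frac{hk}{d}\right\rfloor\right)\ .
\]
The key simplification is that every $d\mid h$ also divides $hk$, so $\lfloor hk/d\rfloor=hk/d$ exactly; hence $\sum_{d\mid h}\mu(d)\lfloor hk/d\rfloor=k\sum_{d\mid h}\mu(d)\,h/d=k\varphi(h)$, using the identity $\varphi(h)=\sum_{d\mid h}\mu(d)\,h/d$ already invoked in the proof of Theorem~\ref{N_n(h)}. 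This is the only place where the argument differs from that theorem: there the corresponding term was $\lfloor h/d\rfloor=h/d$, giving $\varphi(h)$, whereas here the upper endpoint is shifted to $hk$ and the term becomes $k\varphi(h)$.

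Finally I would treat the remaining sum over $\lfloor n/d\rfloor$ by writing $\lfloor n/d\rfloor=n/d-\{n/d\}$ and using $\sum_{d\mid h}\mu(d)/d=\varphi(h)/h$, obtaining
\[
\sum_{d\mid h}\mu(d)\left\lfloor\frac{n}{d}\right\rfloor=n\frac{\varphi(h)}{h}-\sum_{d\mid h}\mu(d)\left\{\frac{n}{d}\right\}\ ,
\]
and combining the two pieces yields the claimed formula. There is no real obstacle here: the whole derivation is a short variant of Theorem~\ref{N_n(h)}, and the only point requiring a moment's care is translating the order condition $h/j<1/k$ into the correct lower summation limit $j>hk$ and observing that $\lfloor hk/d\rfloor$ collapses to $hk/d$ precisely because $d\mid h$.
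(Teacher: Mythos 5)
Your proof is correct, but it takes a different route from the paper's. You redo the direct M\"obius count from the proof of Theorem~\ref{N_n(h)}, translating $h/j<1/k$ into the shifted lower limit $j>hk$ and exploiting that $d\mid h$ forces $\lfloor hk/d\rfloor=hk/d$, which turns the boundary term into $k\varphi(h)$; your observation that $1/1$ is automatically excluded (so no $\delta_{1h}$ correction is needed) is also right. The paper instead deduces the corollary from Theorem~\ref{N_n(h)} together with Lemma~\ref{lema1}: it writes $\mathcal{N}_n^{1/k}(h)=\mathcal{N}_n(h)-\delta_{1h}-(k-1)\varphi(h)$, using the fact that each interval between consecutive unit fractions $1/j$ and $1/(j-1)$ contains exactly $\varphi(h)$ Farey fractions with numerator $h$, and subtracting $\delta_{1h}$ to account for the endpoint $1/k$ itself. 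Your argument is more self-contained --- it does not rely on Lemma~\ref{lema1} or on the map from~\cite{PFS} behind it, and it sidesteps the slightly delicate endpoint bookkeeping --- while the paper's derivation is shorter on the page and highlights the structural fact that the fractions with numerator $h$ are equidistributed ($\varphi(h)$ per interval) among the intervals $(1/j,1/(j-1)]$, which is the same mechanism reused later for the $2/k$ results in Section~\ref{sec3}.
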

\begin{proof}
\[
\mathcal{N}_n^{1/k}(h)=  \mathcal{N}_n(h)-\delta_{1h}-(k-1)\varphi(h)\ {\rm for}\ k< n/h\ ,
\]
where we have used that the number of Farey fractions in between $1/k$ and $1/(k-1)$ with numerators equal to $h$ is $\varphi(h)$ when $ 1<k<  n/h $, see Lemma~\ref{lema1}. The term $\delta_{1h}$ needs to be subtracted as $F_n^{1/k}$ does not include the fraction $1/k$ by definition.

\end{proof}

The total number of Farey fractions in $F_n$ is given by $\sum_{i=1}^n\varphi(i)$ and the sum of the numerators of Farey fractions in $F_n$ is known to be $\left(1+\sum_{i=1}^ni\varphi(i)\right)/2$, hence the following equalities can be established and validated with Theorem~\ref{N_n(h)},
\begin{eqnarray}
\sum_{h=1}^n \mathcal{N}_n(h) &=& \sum_{i=1}^n \varphi(i)\ , \nonumber\\
\sum_{h=1}^n h\mathcal{N}_n(h) &=& \frac{1}{2}\left(1+\sum_{i=1}^ni\varphi(i)\right)\ .\nonumber
\end{eqnarray}

\section{Rank of unit fractions in $F_n$}\label{sec2}

\begin{thm}\label{Ink}
For given integers $k$ and $n$, such that $0<k\leq n$ 
\[
I_n(1/k)= n\sum_{j=1}^{\lfloor n/k\rfloor}\frac{\varphi(j)}{j} - k\Phi(\lfloor n/k\rfloor) - \sum_{j=1}^{\lfloor n/k\rfloor}\sum_{d|j}\mu(d)\left\{\frac{n}{d} \right\}\ ,
\]
where $\Phi(n)=\sum_{j=1}^n \varphi(j)$ is the Totient summatory function. 
\end{thm}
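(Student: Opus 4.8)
The plan is to decompose $I_n(1/k)=\left|F_n^{1/k}\right|$ according to the numerators of the fractions lying below $1/k$ and then insert the closed form for $\mathcal{N}^{1/k}_n(h)$ supplied by Corollary~\ref{corN1k}. Since $F_n^{1/k}$ is partitioned by the value of the numerator, one has at once
\[
I_n(1/k)=\sum_{h\geq 1}\mathcal{N}^{1/k}_n(h)\ ,
\]
so the first task is to pin down exactly which $h$ contribute.

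First I would identify the largest numerator that can occur below $1/k$ in $F_n$. A fraction $h/m<1/k$ forces $m>hk$, and the choice $m=hk+1$ is always admissible because $\gcd(h,hk+1)=1$; hence a fraction with numerator $h$ exists below $1/k$ precisely when $hk+1\leq n$, i.e.\ $h\leq\lfloor (n-1)/k\rfloor$. This bound equals $\lfloor n/k\rfloor$ whenever $k\nmid n$ and equals $\lfloor n/k\rfloor-1$ when $k\mid n$.

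Next I would justify taking the uniform upper limit $\lfloor n/k\rfloor$ and applying the Corollary~\ref{corN1k} formula across the whole range, even though that corollary is stated for $h<n/k$. The only delicate index is $h=\lfloor n/k\rfloor=n/k$ in the case $k\mid n$. On one hand, no fraction with numerator $n/k$ lies below $1/k$ in $F_n$, since it would require a denominator exceeding $n$, so the true contribution is $0$. On the other hand, substituting $h=n/k$ into the Corollary~\ref{corN1k} expression makes the terms $n\varphi(h)/h=k\varphi(h)$ and $k\varphi(h)$ cancel, while every $d\mid h$ divides $n$, so each $\{n/d\}=0$ and the residual sum also vanishes. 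Thus the formula itself returns $0$ at this boundary index, matching the true count, and the uniform upper limit is legitimate.

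With the range settled, the remaining step is routine bookkeeping: substitute
\[
\mathcal{N}^{1/k}_n(h)=n\frac{\varphi(h)}{h}-k\varphi(h)-\sum_{d\mid h}\mu(d)\left\{\frac{n}{d}\right\}
\]
term by term, pull the constants $n$ and $k$ out of the corresponding sums, and recognise $\sum_{h=1}^{\lfloor n/k\rfloor}\varphi(h)=\Phi(\lfloor n/k\rfloor)$, which yields the stated expression. I expect the only genuine obstacle to be the boundary analysis of the previous paragraph; once the vanishing of the fractional parts at $h=n/k$ is observed, the algebra collapses immediately.
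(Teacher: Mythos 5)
Your proposal is correct and follows essentially the same route as the paper: decompose $I_n(1/k)$ as $\sum_{h=1}^{\lfloor n/k\rfloor}\mathcal{N}_n^{1/k}(h)$ and substitute Corollary~\ref{corN1k}. Your extra verification that the boundary index $h=n/k$ (when $k\mid n$) lies outside the hypothesis $k<n/h$ of Corollary~\ref{corN1k} yet contributes $0$ both in truth and in the formula is a point the paper's one-line proof silently skips, and it is worth having.
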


\begin{proof}
This is simply obtained by adding the number of fractions with numerators equal to $j$, $\mathcal{N}_n^{1/k}(j)$, from $j=1$ up to $j=\lfloor n/k\rfloor$, using Corollary~\ref{corN1k},
\[
I_n(1/k)=\sum_{j=1}^{\lfloor n/k\rfloor} \mathcal{N}_n^{1/k}(j)\ .
\]
\end{proof}
Theorem~\ref{Ink} is a generalization of Theorem~3 in~\cite{PFS}.
When $n$ is a multiple of all integers between 1 and $\lfloor n/ k \rfloor$, the fractional part $\{n/d\}$ is equal to zero and Theorem~\ref{Ink} takes the form of Theorem~3 in~\cite{PFS}.
Theorem~\ref{Ink} allows to establish remarkable equalities between the ranks of different unit fractions, as shown in~Corollary~\ref{cor2}.  

\begin{cor}\label{cor2}
Given $n=ck$ and $n'=ck'$ with $k'=k+p\;{\rm lcm}(1,2,...,c)$, being ${\rm lcm}$ the least common multiple function, for any integers $c>0$ and $p \geq 0$,
\begin{eqnarray}
I_{ck'}(1/k') &=& I_{ck}(1/k) +(k'-k)\left(c\sum_{j=1}^{c}\frac{\varphi(j)}{j} - \Phi( c)  \right)\ , \nonumber \\
I_{ck'}(1/k') - I_{ck}(1/k) &=& I_{c(k'-k)}(1/(k'-k))\ ,\nonumber \\
\frac{I_{c(k'-k)}(1/(k'-k))}{k'-k} &=& c\sum_{j=1}^{c}\frac{\varphi(j)}{j} - \Phi(c)\ . \nonumber
\end{eqnarray}
\end{cor}
\begin{proof}
The first identity is derived from Theorem~\ref{Ink} by realizing that
\[
\sum_{j=1}^{c}\sum_{d|j}\mu(d)\left\{\frac{n}{d} \right\} = \sum_{j=1}^{c}\sum_{d|j}\mu(d)\left\{\frac{n+p\;{\rm lcm}(1,2,...,c)}{d} \right\}  
\]
for any integers $n>0$ and $p\geq 0$. The other two identities are easily derived and are shown for their remarkable shape.

\end{proof}


\section{Rank of  fractions of the form $2/k$ in $F_n$}\label{sec3}
\begin{cor}\label{corN2k}
For given integers $h$, $n$ and $k$ such that $0<h  < n$ and $0<k< 2n/h-1$ and $k$ being  odd  
\[
\mathcal{N}^{2/k}_n(h)=n \frac{\varphi(h)}{h}-  \frac{k}{2}\varphi(h) - \sum_{d|h}\mu(d)\left\{\frac{n}{d}\right\}   \ .
\]
\end{cor}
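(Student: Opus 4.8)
The plan is to reduce the count of numerator-$h$ fractions below $2/k$ to a count below a \emph{unit} fraction, for which Corollary~\ref{corN1k} already applies, plus a boundary contribution supplied by the second statement of Lemma~\ref{lema1}. Since $k$ is odd, I would set $K=(k+1)/2\in\mathbb{N}$, so that $k=2K-1$ and $2/k=2/(2K-1)$. By Lemma~\ref{lema1}, $2/(2K-1)$ is precisely the image of $1/2$ under the map of that lemma, and a short check gives $1/K<2/k<1/(K-1)$, so $2/k$ lies strictly inside the single gap between the consecutive unit fractions $1/K$ and $1/(K-1)$.

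First I would split the numerator-$h$ fractions lying below $2/k$ into those below $1/K$ and those in the interval between $1/K$ and $2/k$:
\[
\mathcal{N}^{2/k}_n(h)=\mathcal{N}^{1/K}_n(h)+\#\{\alpha\in F_n:\ 1/K\le \alpha<2/k,\ \mathrm{numerator}(\alpha)=h\}\ .
\]
For $h>2$ the endpoints $1/K$ (numerator $1$) and $2/k$ (numerator $2$) do not carry numerator $h$, so their inclusion or exclusion is immaterial, and the second statement of Lemma~\ref{lema1} gives that the number of numerator-$h$ fractions between $1/K$ and $2/(2K-1)=2/k$ equals $\varphi(h)/2$. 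Hence $\mathcal{N}^{2/k}_n(h)=\mathcal{N}^{1/K}_n(h)+\varphi(h)/2$.

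Then I would substitute $K=(k+1)/2$ into the expression of Corollary~\ref{corN1k},
\[
\mathcal{N}^{1/K}_n(h)=n\frac{\varphi(h)}{h}-\frac{k+1}{2}\varphi(h)-\sum_{d|h}\mu(d)\left\{\frac{n}{d}\right\}\ ,
\]
and add $\varphi(h)/2$; the $\varphi(h)$ terms combine as $-\tfrac{k+1}{2}\varphi(h)+\tfrac{1}{2}\varphi(h)=-\tfrac{k}{2}\varphi(h)$, which is exactly the claimed formula. The hypotheses also match up: the requirement $0<K<n/h$ needed to invoke Corollary~\ref{corN1k} is $(k+1)/2<n/h$, i.e. $k<2n/h-1$, which is the stated bound; and since this forces $Kh<n$, the order $n$ is large enough for the half-count of Lemma~\ref{lema1} to be fully attained.

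The main point to watch is the small-$h$ behaviour rather than the algebra. The clean value $\varphi(h)/2$ requires $h>2$, because for $h\in\{1,2\}$ one has $\varphi(h)=1$ and the formula returns a half-integer; for instance, a direct count of numerator-$1$ fractions below $2/k$ gives $n-(k-1)/2$, which differs from the formula's $n-k/2$ by $\tfrac12$. Thus the statement is to be read for $h>2$, exactly as in Lemma~\ref{lema1}, and the cases $h=1,2$ would have to be treated separately if one wanted them. A secondary check, already noted above, is simply to confirm that the open endpoint at $2/k$ (excluded from $F_n^{2/k}$ by definition) and the endpoint at $1/K$ are both harmless for $h>2$.
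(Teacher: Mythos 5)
Your proof is correct and follows essentially the same route as the paper: the paper likewise obtains $\mathcal{N}^{2/(2k-1)}_n(h)$ by adding the $\varphi(h)/2$ fractions between $1/k$ and $2/(2k-1)$ supplied by Lemma~\ref{lema1} to the count from Corollary~\ref{corN1k}, and then renames $2k-1$ as $k$. Your closing observation that the identity fails for $h\in\{1,2\}$ (where Lemma~\ref{lema1} gives no half-count, and a direct count of numerator-$1$ fractions yields $n-(k-1)/2$ rather than $n-k/2$) is a genuine caveat that the paper's statement omits.
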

\begin{proof}
By virtue of Theorem~\ref{N_n(h)} and Lemma~\ref{lema1}
\[
\mathcal{N}^{2/(2k-1)}_n(h)=n \frac{\varphi(h)}{h}- \left(k- \frac{1}{2}\right)\varphi(h) - \sum_{d|h}\mu(d)\left\{\frac{n}{d}\right\}   \ .
\]
By redefining $(2k-1)$ as $k$ with $k$ being an odd integer fulfilling that  $-1<k< 2n/h-1$,
\[
\mathcal{N}^{2/k}_n(h)=n \frac{\varphi(h)}{h}-  \frac{k}{2}\varphi(h) - \sum_{d|h}\mu(d)\left\{\frac{n}{d}\right\}   \ .
\]
\end{proof}

\begin{cor}For $k$ being an odd integer fulfilling $n\geq k > 2$, the rank of $2/k$ in $F_n$ is given by
\[
I_n(2/k)= n\sum_{j=1}^{\lfloor 2n/k\rfloor}\frac{\varphi(j)}{j} - \frac{k}{2}\Phi(\lfloor 2n/k\rfloor) - \sum_{j=1}^{\lfloor 2n/k\rfloor}\sum_{d|j}\mu(d)\left\{\frac{n}{d} \right\}\ .
\]
Furthermore, with $n=ck/2$, $n'=ck'/2$, $k'=k+p\;{\rm lcm}(1,2,...,c)$ for any even integer $c>1$ and any integer $p\geq 0$
\[
I_{n'}(2/k') = I_{n}(2/k) +\frac{k'-k}{2}\left(c\sum_{j=1}^{c}\frac{\varphi(j)}{j} - \Phi( c)  \right) \ .
\]
\end{cor}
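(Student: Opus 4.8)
The plan is to obtain the first identity exactly as Theorem~\ref{Ink} follows from Corollary~\ref{corN1k}, with $1/k$ replaced by $2/k$ and $\mathcal{N}_n^{1/k}$ by $\mathcal{N}_n^{2/k}$. I would first write the rank of $2/k$ as the count of all fractions below it, organised by numerator,
\[
I_n(2/k)=\sum_{j=1}^{\lfloor 2n/k\rfloor}\mathcal{N}_n^{2/k}(j)\ ,
\]
insert the closed form of Corollary~\ref{corN2k}, and sum the three contributions separately, using $\sum_{j=1}^{M}\varphi(j)=\Phi(M)$ with $M=\lfloor 2n/k\rfloor$ on the middle term. This collapses directly into the stated expression and is purely mechanical once the summation range and the termwise validity are secured.

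Securing those two points is the main obstacle. For the range, a reduced fraction $j/m$ with $m\le n$ lies below $2/k$ only if $m>jk/2$, so a numerator $j$ can occur only when $j<2n/k$; hence every nonzero summand has $j\le\lfloor 2n/k\rfloor$ and the upper limit is correct. For the termwise validity one must check that Corollary~\ref{corN2k}, whose derivation via Lemma~\ref{lema1} assumed $h>2$, may nonetheless be summed from $j=1$. The cleanest way is to compare its right-hand side with the exact count
\[
\mathcal{N}_n^{2/k}(h)=\sum_{d|h}\mu(d)\left(\left\lfloor\frac{n}{d}\right\rfloor-\left\lfloor\frac{hk}{2d}\right\rfloor\right)\ ,
\]
obtained by the same M\"obius sieve as in Theorem~\ref{N_n(h)}. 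Writing $\lfloor n/d\rfloor=n/d-\{n/d\}$ and $\lfloor hk/(2d)\rfloor=hk/(2d)-\{hk/(2d)\}$ shows that this exact count exceeds the formula of Corollary~\ref{corN2k} by exactly $\sum_{d|h}\mu(d)\{hk/(2d)\}$, and since $k$ is odd this correction equals $0$ for every $h\ge 3$ and equals $+\tfrac12$ for $h=1$ and $-\tfrac12$ for $h=2$. Thus the closed form is exact for $j\ge 3$, while the two half-integer corrections at $j=1$ and $j=2$ cancel in the sum, so Corollary~\ref{corN2k} may indeed be summed verbatim.

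For the second identity I would reproduce the invariance argument of Corollary~\ref{cor2}. With $n=ck/2$ and $n'=ck'/2$ one has $\lfloor 2n/k\rfloor=\lfloor 2n'/k'\rfloor=c$, so both ranks are given by the first formula with the same upper limit $c$; note also that ${\rm lcm}(1,2,\dots,c)$ is even for $c>1$, so $k'$ remains odd as required. Because $c$ is even, $n'-n=\tfrac{c}{2}(k'-k)=\tfrac{c}{2}\,p\,{\rm lcm}(1,2,\dots,c)$ is a multiple of ${\rm lcm}(1,2,\dots,c)$, so $\{n'/d\}=\{n/d\}$ for every $d\le c$ and the double M\"obius sums in the two expressions coincide. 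Subtracting the two instances of the first formula then leaves
\[
I_{n'}(2/k')-I_n(2/k)=(n'-n)\sum_{j=1}^{c}\frac{\varphi(j)}{j}-\frac{k'-k}{2}\Phi(c)\ ,
\]
and substituting $n'-n=\tfrac{c}{2}(k'-k)$ and factoring out $\tfrac{k'-k}{2}$ yields the claimed relation. The genuinely delicate part of the whole argument is the boundary bookkeeping of the second paragraph; once that is settled, the remaining steps are routine.
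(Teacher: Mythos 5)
Your proposal is correct and takes the same route as the paper, whose entire proof is the remark that the result ``follows from Corollary~\ref{corN2k} with the same steps as in the proofs of Theorem~\ref{Ink} and Corollary~\ref{cor2}.'' Your additional verification that the corrections of $+\tfrac{1}{2}$ at $j=1$ and $-\tfrac{1}{2}$ at $j=2$ (where the hypothesis $h>2$ of Lemma~\ref{lema1} fails, so Corollary~\ref{corN2k} is not exact termwise) cancel in the sum over $j$ is a genuine detail the paper glosses over, and your observation that ${\rm lcm}(1,\dots,c)$ is even for $c>1$ (so $k'$ stays odd) likewise tightens the second identity.
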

\begin{proof}
This follows from Corollary~\ref{corN2k} with the same steps as in the proofs of Theorem~\ref{Ink} and Corollary~\ref{cor2}.

\end{proof}

\section*{Acknowledgements}
Thanks to Daniel Khoshnoudirad and Andrey O. Matveev for proofreading the manuscript and making highly valuable suggestions.

\end{document}